\theoremstyle{plain}
\newtheorem{theorem}{Theorem}
\newtheorem{lemma}[theorem]{Lemma}
\newcommand{\wet}[1]{w(#1)} 
\newcommand{\eps}{\varepsilon}
\newcommand{\N}{\mathbb{N}}
\newcommand{\R}{\mathbb{R}}
\newcommand{\E}{\mathbb{E}}
\newcommand{\hh}{\mathcal{H}}
\DeclareMathOperator \vol{Vol}
\DeclareMathOperator \conv{conv}
\newcommand{\pth}[1]{\left( #1 \right)}
\newcommand{\ptb}[1]{\left[ #1 \right]}
\begin{document}

\title[Random polytopes and the wet part]
{Random polytopes and the wet part for arbitrary probability distributions}

\author[B\'ar\'any, Fradelizi, Goaoc, Hubard, Rote]{Imre B\'ar\'any, Matthieu Fradelizi, Xavier Goaoc, Alfredo Hubard, G\"unter Rote}

\begin{abstract}
  We examine how the measure and the number of vertices of the
  convex hull of a random sample of an arbitrary probability measure
  in $\R^d$ relates to the wet part of that measure.
\end{abstract}

\maketitle

\section{Introduction and Main Results}

Let $K$ be a convex body (convex compact set with non-empty interior)
in~$\R^d$, and let $X_n=\{x_1,\ldots,x_n\}$ be a random sample of $n$
uniform independent points from $K$. The set $P_n=\conv X_n$ is
a \emph{random polytope} in $K$. For $t\in [0,1)$ we define
the \emph{wet part} $K_t$ of $K$:
\[\begin{aligned}
K_t=\{\,x \in K: & \mbox{ there is a halfspace }h  \mbox{ with } x \in h\\
& \mbox{ and }\vol(K\cap h) \le t\vol K\,\}\\
\end{aligned}
\]
The name ``wet part'' comes from the mental picture when $K$ is in
$\R^3$ and contains water of volume $t\vol K$. B\'ar\'any and
Larman~\cite{BarLar} proved that the measure of the wet part captures
how well $P_n$ approximates $K$ in the following sense:

\begin{theorem}[{\cite[Theorem~1]{BarLar}}] \label{th:BL}
  There are constants $c$ and $N_0$ depending only on $d$ such that
  for every convex body $K$ in $\R^d$ and for every $n>N_0$
  \[ \frac 14 \vol K_{1/n} \le \E[\vol(K\setminus P_n)] \le \vol K_{c/n}.\]
\end{theorem}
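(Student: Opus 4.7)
My starting point is the Fubini identity
\[
\E[\vol(K\setminus P_n)] = \int_K \Pr[x\notin P_n]\, dx,
\]
which reduces the theorem to pointwise control of the non-inclusion probability in terms of the cap-depth function $v(x) := \inf\{\vol(K\cap h)/\vol K : h \text{ halfspace with } x\in h\}$ (so that $x\in K_t$ if and only if $v(x)\le t$). I plan to attack the two bounds independently, with the lower bound being almost immediate and the upper bound carrying the bulk of the work.

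\textbf{Lower bound.} For each $x\in K_{1/n}$, a defining halfspace $h$ satisfies $\vol(K\cap h)\le \vol K/n$. The event that all $n$ sample points land in $K\setminus h$ forces $x\notin P_n$ and has probability at least $(1-1/n)^n\ge \tfrac14$ for $n\ge 2$. Integrating this pointwise bound over $K_{1/n}$ immediately yields $\tfrac14\vol K_{1/n}\le \E[\vol(K\setminus P_n)]$, without any combinatorial input beyond the definition of the wet part.

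\textbf{Upper bound.} The trivial estimate $\Pr[x\notin P_n]\le 1$ on $K_{c/n}$ already contributes exactly $\vol K_{c/n}$, so the task becomes showing that
\[
\int_{K\setminus K_{c/n}} \Pr[x\notin P_n]\, dx = O(\vol K_{c/n})
\]
uniformly in $K$. My plan is to decompose $K\setminus K_{c/n}$ into dyadic shells $A_k := K_{2^{k+1}c/n}\setminus K_{2^k c/n}$ and to show $\Pr[x\notin P_n] \le e^{-\Omega(2^k c)}$ on $A_k$, using that every halfspace through $x\in A_k$ has $K$-measure at least $2^k c/n$ and hence is missed by all $n$ points with probability at most $(1-2^k c/n)^n\le e^{-2^k c}$. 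Once this per-shell decay is in hand, the sum $\sum_k \vol(A_k)\, e^{-\Omega(2^k c)}$ collapses to a telescoping expression dominated by a geometric series, and is controlled by $O(\vol K_{c/n})$ as soon as $c=c(d)$ is large enough.

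\textbf{Main obstacle.} The delicate step is upgrading the bound for a single witness halfspace into a bound uniform over the continuum of candidate witnesses: if $x\notin P_n$, the separating halfspace is a priori arbitrary, so a naive union bound is infinite. The standard remedy is an economic cap-covering lemma that replaces the uncountable witness family by a polynomial-in-$n$ (in fact, $O((n/c)^{O(d)})$) family of caps of controlled size; the polynomial union-bound factor is then absorbed into the exponential once $c$ is sufficiently large in terms of $d$. All the dimensional dependence of $c$ and $N_0$ ends up hidden in the constants of this cap-covering step, and I expect this is the only part of the argument that genuinely requires convex-geometric input beyond the definitions.
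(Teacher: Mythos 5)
Theorem~\ref{th:BL} is cited from \cite{BarLar} and not reproved in this paper, so there is no ``paper's own proof'' in the strict sense; what the paper does provide is its own proof of the generalization Theorem~\ref{th:BLgen} (Section~\ref{sec:proofBLgen}) and a discussion in Section~\ref{s:regularity} of why the convex-body upper bound relies on the economic cap-covering machinery and the regularity inequality~\eqref{eq:wet}.

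Your lower bound is correct and is precisely the argument the paper uses for the lower bound of Theorem~\ref{th:BLgen}: pick a witness halfspace, bound $\Pr[x\notin P_n]$ by $(1-t)^n$, integrate, and set $t=1/n$.

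For the upper bound, the dyadic-shell decomposition and the appeal to economic cap coverings are the right ingredients, and you correctly flag the uniform-over-halfspaces union bound as the crux. However, the step where you ``absorb the polynomial union-bound factor into the exponential once $c$ is large'' does not work as stated, and this is exactly where the real content lies. On the first shell $A_0=K_{2c/n}\setminus K_{c/n}$, the cap-covering at level $c/n$ has size $m\approx (n/c)^{\Theta_d(1)}$, while the single-cap failure probability is $e^{-\Omega(c)}$, a constant independent of~$n$. Their product $m\cdot e^{-\Omega(c)}$ therefore diverges as $n\to\infty$ for any fixed~$c$; no choice of $c=c(d)$ repairs this. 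This is not a removable technicality: it is precisely why the paper's own Theorem~\ref{th:BLgen}, which proceeds by a global union bound (via $\eps$-nets), must take the parameter $\eps\approx(d+2)\ln n/n$ rather than $c/n$ --- the extra $\ln n$ in the exponent is there specifically to beat the $n^{\Theta(d)}$ union-bound cost. To get a parameter of the form $c/n$, one cannot afford the global union bound; the argument in \cite{BarLar} exploits the spatial locality of the Macbeath-region cap covering (the covering caps have bounded overlap, so each point $x$ lies in only $O_d(1)$ of them), combined with the regularity inequality~\eqref{eq:wet} to telescope the shell contributions. Your plan needs to replace the naive global union bound with a local one of this kind; as written, it would only yield the weaker $\log n/n$ upper bound that the paper already proves for arbitrary measures.
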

By Efron's formula
 (see \eqref{eq:Efron} below),
this directly translates into bounds for the expected number 
of vertices of~$P_n$, see Section~\ref{f-vectors}.

\subsection{Results for general measures.}

The notions of random polytope and wet part extend to a general
probability measure $\mu$ defined on the Borel sets of $\R^d$. The
definition of a $\mu$-random polytope $P_n^{\mu}$ is clear: $X_n$ is a
sample of $n$ random independent points chosen according to $\mu$, and
$P_n^{\mu}=\conv X_n$. The wet part $W_t^{\mu}$ is defined as
\[ W_t^{\mu}=\{\,x \in \R^d: \mbox{ there is a halfspace }h \mbox{
with } x \in h \mbox{ and }\mu (h) \le t\,\}.\]
The $\mu$-measure of the wet part is denoted by
$w^{\mu}(t):=\mu(W_t^{\mu})$. Here is an extension of Theorem~\ref{th:BL}
to general measures:

\begin{theorem}\label{th:BLgen}
  For any probability measure $\mu$ in $\R^d$ and $n \ge 2$,
   \[ \frac 14  w^{\mu}(\tfrac1n) \le \E[1-\mu(P_n^\mu)] \le 
     w^{\mu}({(d+2)\tfrac{\ln n}n}) + \tfrac{\varepsilon_d(n)}{n},\]
          where $\varepsilon_d(n)\to 0$ as $n\to+\infty$ and is independent of $\mu$.
\end{theorem}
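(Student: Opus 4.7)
Both inequalities follow from the Fubini identity
$\E[1-\mu(P_n^\mu)] = \int_{\R^d} \Pr[x\notin P_n^\mu]\,d\mu(x)$,
where $x$ plays the role of an independent test point and the probability is over the random sample $X_n$.

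For the lower bound, given $x\in W_{1/n}^\mu$, fix a witness halfspace $h_x\ni x$ with $\mu(h_x)\le 1/n$. The event $\{X_n\cap h_x=\emptyset\}$ has probability $(1-\mu(h_x))^n\ge(1-1/n)^n\ge \tfrac14$ for $n\ge 2$, and on this event $P_n^\mu\subseteq\R^d\setminus h_x$, hence $x\notin P_n^\mu$. Integrating against $\mu$ yields $\tfrac14\,w^\mu(1/n)\le \E[1-\mu(P_n^\mu)]$.

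For the upper bound, set $t_n:=(d+2)\ln n/n$ and $F_{t_n}:=\R^d\setminus W_{t_n}^\mu$. The integral over $W_{t_n}^\mu$ is trivially bounded by $w^\mu(t_n)$. For $x\in F_{t_n}$, every halfspace containing $x$ has $\mu$-mass $>t_n$, so the event $\{x\notin P_n^\mu\}$ (which requires some halfspace through $x$ to avoid $X_n$) implies the sample-wide event
$$E_n:=\{\exists\text{ halfspace }H:\mu(H)>t_n \text{ and } X_n\cap H=\emptyset\}.$$
The $F_{t_n}$-contribution to the integral is thus at most $\Pr[E_n]$, and it remains to show $\Pr[E_n]=o(1/n)$. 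The strategy is to reduce to the $O(n^d)$ ``canonical'' halfspaces $H_{S,\sigma}$ whose boundary is spanned by a $d$-subset $S\subseteq X_n$ (with orientation $\sigma\in\{+,-\}$): given any witness $H$ of $E_n$, one slides $\partial H$ towards $X_n$ until it pins a first sample point, then rotates around that point to pin a second, and so on, producing some $H_{S,\sigma}$ with $\mu(H_{S,\sigma})>t_n$ and $X_n\setminus S$ in its open complement. Conditioning on $S=\{X_1,\ldots,X_d\}$ by exchangeability, the remaining $n-d$ points are i.i.d.\ $\mu$-distributed, so
$$\Pr[E_n]\le 2\binom{n}{d}\,\E\!\left[(1-\mu(H_{S,\sigma}))^{n-d}\mathbf{1}_{\mu(H_{S,\sigma})>t_n}\right]\le \frac{2}{d!}\,n^d\,e^{-t_n(n-d)}=O(n^{-2})$$
with our choice of $t_n$. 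Setting $\varepsilon_d(n):=n\,\Pr[E_n]=O(n^{-1})$ then completes the bound.

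The main obstacle is justifying the canonicalization. The slide-and-rotate procedure preserves $X_n\cap H=\emptyset$, but $\mu(H)$ may drop during a rotation, so one must choose rotation directions keeping $\mu(H)>t_n$ at every stage. This is transparent for sufficiently regular $\mu$, but requires care when $\mu$ has atoms or is supported on a lower-dimensional set. I would handle the general case either by approximating $\mu$ weakly by absolutely continuous measures and passing to the limit (using that both sides of the target inequality behave continuously under such approximation) or by a compactness/extremality argument selecting, inside each appropriate family, the halfspace that maximizes $\mu(H)$ subject to missing $X_n$ on its open side.
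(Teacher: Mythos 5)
Your lower-bound argument matches the paper's exactly, and the decomposition for the upper bound (splitting the integral over $W_{t_n}$ and its complement, and reducing to bounding the probability of the ``not an $\eps$-net'' event) is also the same. Where you diverge is in how to bound $\Pr[E_n]$. The paper invokes a quantitative version of the $\eps$-net theorem due to Koml\'os, Pach and Woeginger (its Lemma~\ref{lemma-shatter}), which rests on the double-sampling/symmetrization argument of Vapnik--Chervonenkis: one never needs to produce a witnessing halfspace that is determined by $d$ sample points, only to count distinct intersections of halfspaces with the combined sample via the shatter function. You instead attempt a direct union bound over the $O(n^d)$ canonical halfspaces spanned by $d$-subsets of $X_n$, which requires the canonicalization you describe.

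That canonicalization step is not a technicality that you can defer --- it is the crux, and it genuinely fails for the measures this theorem is meant to cover. The translation step (growing $H$ until $\partial H$ pins a sample point) is fine, since it monotonically increases $\mu(H)$. But once you start rotating about pinned points, $\mu(H)$ is not monotone, unimodal, or upper-semicontinuously well-behaved along the rotation; for measures concentrated on lower-dimensional sets (which is exactly the regime of interest here, cf.\ the two-circle drop construction and the construction proving Theorem~\ref{th:example}) there is no guarantee that any rotation direction reaching a second pinned point keeps $\mu(H)>t_n$, nor that the $\mu$-maximizing halfspace missing $X_n$ is attained by one whose boundary is spanned by $d$ sample points. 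Your two proposed patches also have problems: the measure of the wet part is \emph{not} continuous under weak convergence (indeed the whole point of Section~\ref{s:regularity} is that $w^\mu(t)$ can have sharp drops), so passing to the limit along absolutely continuous approximations does not obviously preserve the inequality in the needed direction; and the compactness/extremality route produces a maximizer that need not pass through any sample points at all. The KPW lemma is precisely the tool designed to sidestep this difficulty, and that is why the paper relies on it rather than on a direct first-moment union bound.
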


\noindent
A similar upper bound, albeit with worse constants,
follows from a result of Vu~\cite[Lemma 4.2]{VanVu}, which states that
$P_n^\mu$ contains $\R^d \setminus W_{c\ln n/n}^{\mu}$ with
high-probability. Since a containment with high probability is usually
stronger than an upper bound in expectation, one may have hoped that the
$\log n/n$ in the upper bound of Theorem~\ref{th:BLgen} can be
reduced. Our main result shows that this is not possible, not even in
the plane:

\begin{theorem}\label{th:example}
  There exists a probability measure $\nu$ on $\R^2$ such that \[
    \E[1-\mu(P_n^\nu)] > \frac 1{2} \cdot
    w^\nu(\log_2 n/ n) \] for infinitely many $n$.
\end{theorem}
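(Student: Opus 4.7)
The plan is to construct $\nu$ as a purely atomic probability measure supported on countably many points in strictly convex position in $\R^2$---say on the unit circle---with weights $(m_k)$ chosen so that both sides of the target inequality become comparable. Concretely, let $\nu=\sum_{k\ge1} m_k\,\delta_{p_k}$ with $\sum_k m_k = 1$, where $m_k = c/(k(\log(k+1))^2)$ and $c$ is the normalizing constant. Writing $F(t) := \sum_{k:\,m_k\le t} m_k$ for the cumulative mass in atoms of weight at most $t$, a routine tail estimate for the series gives $F(t) \sim c'/\log(1/t)$ as $t\to0^+$, which is slowly varying.

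Two elementary observations render both sides of the inequality explicit. Because $(p_k)$ is in strict convex position, $p_k \in \conv S$ iff $p_k\in S$ for every $S\subset\{p_j\}$; hence $p_k \in P_n^\nu$ iff $p_k \in X_n$, an event of probability $1-(1-m_k)^n$, giving
\[
  \E[1-\mu(P_n^\nu)] = \sum_{k\ge1} m_k(1-m_k)^n.
\]
A halfplane tangent to $\conv\{p_j\}$ at $p_k$ has $\nu$-measure exactly $m_k$, while any halfplane containing $p_k$ has $\nu$-measure at least $m_k$; so $p_k\in W_t^\nu$ iff $m_k\le t$, and therefore $w^\nu(t)=F(t)$.

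The core of the argument is the asymptotic equivalence $\E[1-\mu(P_n^\nu)] \sim F(1/n)$ as $n\to\infty$. Viewing the sum as a Riemann--Stieltjes integral against $dF$ and integrating by parts gives
\[
  \sum_k m_k(1-m_k)^n = n\int_0^1 F(m)(1-m)^{n-1}\,dm.
\]
Substituting $u=mn$ and using $(1-u/n)^{n-1}\to e^{-u}$, together with the slow-variation estimate $F(u/n) = (1+o(1))F(1/n)$ uniformly on any bounded $u$-range, a standard splitting of the integration domain at a slowly growing threshold $A(n)\to\infty$ yields $\E[1-\mu(P_n^\nu)] = (1+o(1))F(1/n)$. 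A second application of slow variation, $F(\log_2 n/n) = (1+o(1))F(1/n)$, then gives
\[
  \frac{\E[1-\mu(P_n^\nu)]}{w^\nu(\log_2 n/n)} \longrightarrow 1,
\]
so the required inequality holds for \emph{all} sufficiently large $n$, which is far stronger than the stated ``infinitely many $n$'' requirement.

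The main technical obstacle is controlling the tails in the integral-representation step: showing that the contribution from $u$ much larger than $A(n)$ is $o(1/\log n)$, and that $F(u/n)$ is well-approximated by $F(1/n)$ uniformly on $u\le A(n)$. Both hinge on the slow variation of $F$, which for our explicit choice reduces to the elementary tail bound $\sum_{k\ge K}1/(k\log^2 k) \sim 1/\log K$. A further sanity check---which the construction clears easily---is that the cruder inequality $\E[1-\mu(P_n^\nu)] \ge \tfrac12 F(1/(2n))$ already follows from Bernoulli's inequality $(1-m)^n \ge 1-nm$ applied to atoms with $m_k\le 1/(2n)$, so the bound $> \tfrac12 w^\nu(\log_2 n/n)$ is not even close to saturating what the construction provides.
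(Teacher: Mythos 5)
Your proposal is correct, modulo the routine tail/uniformity estimates that you yourself flag, but it follows a genuinely different route from the paper. The paper takes $\nu$ \emph{non-atomic}, supported on an infinite sequence of concentric circles with rapidly decreasing total mass; the function $w^\nu(1/n)$ then has a sequence of sharp ``drops,'' and the argument picks out special values $n=n_i$ sitting just past each drop, where $w^\nu(\log_2 n_i/n_i)$ has already fallen to the next step $s_i$ while the expected defect has not yet followed. In contrast, you take $\nu$ \emph{atomic} on a single circle, with weights $m_k\asymp 1/(k\log^2 k)$; the geometry then trivializes ($p_k\in P_n^\nu$ iff $p_k\in X_n$, and $w^\nu(t)=F(t)=\sum_{m_k\le t}m_k$), and the real work is the analytic observation that $F$ is slowly varying, so $\E[1-\nu(P_n^\nu)]=n\int_0^1 F(m)(1-m)^{n-1}\,dm\sim F(1/n)\sim F(\log_2 n/n)=w^\nu(\log_2 n/n)$. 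Your version actually proves more on one axis: the ratio tends to $1$, so the inequality holds for \emph{all} sufficiently large $n$, not just infinitely many, and the same remains true with $\log_2 n$ replaced by any $g(n)$ with $\log g(n)=o(\log n)$. The paper's version proves more on another axis: its $\nu$ is non-atomic, so the same construction also yields Theorem~\ref{th:fvector}(ii) via Efron's formula, whereas your atomic example cannot be fed through Efron directly (though one could mollify each atom into a tiny arc, as the paper itself suggests for upgrading circles to annuli, without changing the asymptotics). One detail worth writing out carefully in a full proof: in the lower-bound half of the dominated-convergence step, $F(u/n)$ is \emph{not} $\ge F(1/n)$ for $u<1$, so you cannot simply pull $F(1/n)$ out of the integral; you need the pointwise limit $F(u/n)/F(1/n)\to 1$ for each fixed $u>0$ together with the uniform upper bound $F(u/n)\le F(A(n)/n)=(1+o(1))F(1/n)$ on $[0,A(n)]$ and the exponentially small tail beyond $A(n)$ — exactly the splitting you sketch. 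Your concluding ``sanity check'' ($\E\ge\tfrac12 F(1/(2n))$ via Bernoulli) is on the wrong side of the slow-variation equivalence and does not by itself yield the theorem, but as a plausibility check it is fine.
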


The measure that we construct actually has compact support and can be
embedded into $\R^d$ for any $d \ge 2$. It will be apparent from the
proof that the same construction has the stronger property
that for \emph{every} constant $C>0$, the inequality
$\E[1-\mu(P_n^\nu)] > \frac 1{2} \cdot w^\nu({C \log_2
n/ n})$ holds for infinitely many values~$n$.

\subsection{Consequences for $\mathbf{f}$-vectors}
\label{f-vectors}

 Let $f_0(P_n^\mu)$ denote the number of vertices of
$P_n^\mu$. For \emph{non-atomic} measures (measures where no single
point has positive probability), Efron's formula~\cite{Ef} relates
$E\ptb{f_0(P_n^\mu)}$ and $\E\ptb{\mu(P_n^\mu)}$:
%
  \begin{align} \label{eq:Efron0}
  \E [f_0(P_n^\mu)] & = \sum_{i=1}^n \Pr\ptb{x_i \notin \conv(X_n \setminus \{x_i\})}\\
  & = n\cdot\int_x \Pr\ptb{x \notin P_{n-1}^\mu}d\mu(x) =n(1-\E [\mu(P_{n-1}^\mu)]) \label{eq:Efron}
  \end{align}
%
 For \emph{any} measure, this still holds as an inequality:
 \begin{equation}
   \label{eq:Efron2}
   \E [f_0(P_n^\mu)] \ge \sum_{i=1}^n \Pr\ptb{x_i \notin \conv(X_n \setminus \{x_i\})} =n(1-\E [\mu(P_{n-1}^\mu)])
 \end{equation}
The measure that is constructed in Theorem~\ref{th:example} is non-atomic.
As a consequence, Theorems~\ref{th:BLgen} and~\ref{th:example} give the following bounds for the number of vertices:

\begin{theorem}\label{th:fvector}

\begin{itemize}
\item[(i)] For any non-atomic probability measure $\mu$ in $\R^d$, 
   \[ \frac 1e n w^{\mu}(\tfrac1n) \le \E\ptb{f_0(P_n^\mu)} \le 
     n w^{\mu}\bigl({(d+2)\tfrac{\ln n}n}\bigr) + \varepsilon_d(n),\]
     where $\varepsilon_d(n)\to 0$ as $n\to+\infty$ and is independent of $\mu$.

\item[(ii)] There exists a non-atomic probability measure $\nu$ on $\R^2$ such that \[
     \E\ptb{f_0(P_n^\nu)} > \frac 1{2} n \cdot
    w^\nu(\log_2 n/ n) \] for infinitely many $n$.
\end{itemize}
\end{theorem}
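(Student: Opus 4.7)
The plan is to derive both parts of Theorem~\ref{th:fvector} from Theorems~\ref{th:BLgen} and~\ref{th:example} together with Efron's identity~\eqref{eq:Efron}, which gives
\[
 \E[f_0(P_n^\mu)] = n\bigl(1-\E[\mu(P_{n-1}^\mu)]\bigr)
\]
whenever $\mu$ is non-atomic.

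For the lower bound in (i), routing through Theorem~\ref{th:BLgen} would only yield a factor $1/4$, which is weaker than the claimed $1/e$. I would instead argue directly, using that vertex-counting lets us exploit that each sample point is tested against only $n-1$ others. By linearity and symmetry, $\E[f_0(P_n^\mu)] = n\,\Pr[x_1 \text{ is a vertex of } P_n^\mu]$. For every $x\in W^\mu_{1/n}$ fix a halfspace $h_x$ with $x\in h_x$ and $\mu(h_x)\le 1/n$; conditional on $x_1=x$, the probability that $x_2,\ldots,x_n$ all avoid $h_x$ is $(1-\mu(h_x))^{n-1}\ge (1-1/n)^{n-1}\ge 1/e$, and in that event the bounding hyperplane of $h_x$ separates $x_1$ from the rest, so $x_1$ is a vertex. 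Integrating over $x$ gives the desired $\E[f_0(P_n^\mu)]\ge (n/e)\,w^\mu(1/n)$.

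For the upper bound in (i), Efron's identity combined with the upper bound of Theorem~\ref{th:BLgen} applied at $n-1$ yields $\E[f_0(P_n^\mu)] \le n\,w^\mu\bigl((d+2)\ln(n-1)/(n-1)\bigr) + n\varepsilon_d(n-1)/(n-1)$. The additive term is $O(\varepsilon_d(n-1))$ and can simply be renamed as a new $\varepsilon_d(n)$. The only genuine wrinkle is converting the argument of $w^\mu$ from $(d+2)\ln(n-1)/(n-1)$ to the slightly smaller $(d+2)\ln n/n$: since $w^\mu$ is only known to be monotone, this swap forces either a mild slackening of the constant hidden in Theorem~\ref{th:BLgen} or an absorption of the difference into the error term. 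I expect this to be the main — though only mildly subtle — technical point of the proof.

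For (ii), the measure $\nu$ of Theorem~\ref{th:example} is non-atomic, so Efron's identity gives $\E[f_0(P_{n+1}^\nu)] = (n+1)\,\E[1-\mu(P_n^\nu)]$, and Theorem~\ref{th:example} supplies $\E[1-\mu(P_n^\nu)] > \tfrac12\,w^\nu(\log_2 n/n)$ for infinitely many~$n$. Since $t\mapsto w^\nu(t)$ is non-decreasing and $\log_2 n/n > \log_2(n+1)/(n+1)$, reindexing $m:=n+1$ yields $\E[f_0(P_m^\nu)] > \tfrac{m}{2}\,w^\nu(\log_2 m/m)$ for infinitely many~$m$, as required. The stronger ``for every constant $C>0$'' version mentioned after Theorem~\ref{th:example} carries through the same argument unchanged.
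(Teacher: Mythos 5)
Your proposal is correct and follows essentially the same route as the paper. The lower bound in~(i) is the same direct $(1-1/n)^{n-1}\ge 1/e$ computation that the paper carries out in Section~\ref{p:lowerbound} (phrased there as $\E[f_0(P_n)]=n\E[1-\mu(P_{n-1})]\ge n(1-t)^{n-1}w(t)$, then $t=1/n$), and parts (i)-upper and (ii) are derived from Theorems~\ref{th:BLgen}--\ref{th:example} through Efron's identity, with part~(ii) using the reindexing $m=n+1$ and the monotonicity of $w^\nu$ exactly as the paper does implicitly when it bounds $\E[f_0(\conv X_{n_i+1})]$ by $\tfrac12(n_i+1)w(\log n_i/n_i)$. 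The one point where you are more careful than the paper is in flagging that Efron's identity moves the argument of $w^\mu$ from $(d+2)\ln n/n$ to $(d+2)\ln(n-1)/(n-1)$, and since $t\mapsto \ln t/t$ is decreasing for $t>e$, monotonicity of $w^\mu$ pushes the wrong way here; the paper's remark that the shift ``affects only the constant in the lower bound'' glosses over this. Your proposed repair is the natural one and does work: rerun the $\eps$-net estimate of Section~\ref{sec:upperbound} with $s=n-1$ samples but keep $\eps=(d+2)\ln n/n$; the resulting failure probability changes only by an $O(1)$ factor, which is absorbed into $\varepsilon_d(n)$, so the upper bound holds in the stated form.
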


\noindent
Theorem~\ref{th:fvector} follows from Theorems~\ref{th:BLgen} and~\ref{th:example} except
that Efron's Formula~\eqref{eq:Efron} induces a shift in indices, as
it relates $f_0(P_n^\mu)$ to $\mu(P_{n-1}^\mu)$. This shift
affects only the constant in the lower bound of
Theorem~\ref{th:fvector}(i), which goes from $\frac14$ to
$\frac1e$, see Section~\ref{p:lowerbound}.

The upper bound of Theorem~\ref{th:fvector}(i) fails for general
distributions. For instance, if $\mu$ is a discrete distribution on a
finite set, then $w^\mu(t)=0$ for any $t$ smaller than the mass of any
single point and the upper bound cannot hold uniformly as
$n \to \infty$. Of course, in that case
Inequality~\eqref{eq:Efron2} is strict.

\bigskip

For convex bodies, the number $f_i(P_n)$ of $i$-dimensional faces of
$P_n$ can also be controlled via the measure of the wet part since
B\'ar\'any~\cite{Bar} proved that $\E[f_i(P_n)] = \Theta(n \vol
K_{1/n})$ for every $0 \le i \le d-1$. No similar generalization is
possible for Theorem~\ref{th:BLgen}. Indeed, consider a measure $\mu$
in $\R^4$ supported on two circles, one on the $(x_1,x_2)$-plane, the
other in the $(x_3,x_4)$-plane, and uniform on each circle; $P_n^\mu$
has $\Omega(n^2)$ edges almost surely.

\bigskip

Before we get to the proofs of Theorems~\ref{th:BLgen}
(Section~\ref{s:epsnet}) and~\ref{th:example}
(Section~\ref{sec:example}), we discuss in Section~\ref{s:regularity}
a key difference between the wet parts of convex bodies and of general
measures.

\section{Wet part: convex sets versus general measures}\label{s:regularity}

A key ingredient in the proof of the upper bound of
Theorem~\ref{th:BL} in~\cite{BarLar} is that for a convex body $K$ in
$\R^d$, the measure of the wet part $K_t$ cannot change too abruptly
as a function of $t$: If $c\ge 1$, then
\begin{equation}\label{eq:wet}
\vol K_t \le \vol K_{ct} \le c' \vol K_t
\end{equation}
where $c'$ is a constant that depends only on $c$ and
$d$~\cite[Theorem~7]{BarLar}. In particular, a multiplicative factor
can be taken out of the volume parameter of the wet part and the upper
bound in Theorem~\ref{th:BL} can be equivalently expressed as
\begin{equation}\label{eq:alternative}
 \E[\vol(K\setminus P_n)] \le  c' \vol K_{1/n}. 
\end{equation}
(This is 
in fact
how of the upper bound
of Theorem~\ref{th:BL} is 
actually
formulated in~\cite[Theorem~1]{BarLar}.)
This alternative formulation shows immediately that the lower bound of
Theorem~\ref{th:BL} (and hence also of Theorem~\ref{th:BLgen}) cannot
be improved by more than a constant.

\subsection{Two circles and a sharp drop}

The right inequality in (\ref{eq:wet}) does not extend to general
measures.  An easy example showing this is the following ``drop
construction''. It is a probability measure~$\mu$ in the plane
supported on two concentric circles, uniform on each of them, and with
measure $p$ on the outer circle. Let~$\tau$ denote the measure of
a halfplane externally tangent to the inner circle; remark that $\tau<p/2$. The measure
$w^\mu(t)$ of the wet part drops at $t=\tau$:
\begin{equation}\label{eq:one-drop}
   w^\mu(t) = 
    \begin{cases}
      p, & \text{if } t < \tau\\
      1, & \text{if } t \ge \tau
    \end{cases}
\end{equation}
We can make this drop arbitrarily sharp by choosing a small $p$. In
particular, for any given $c'$, setting $p < \frac1{c'}$ makes it
impossible to fulfill the right inequality in (\ref{eq:wet}) for
$t<\tau<ct$.

\begin{figure}
  \centering \includegraphics{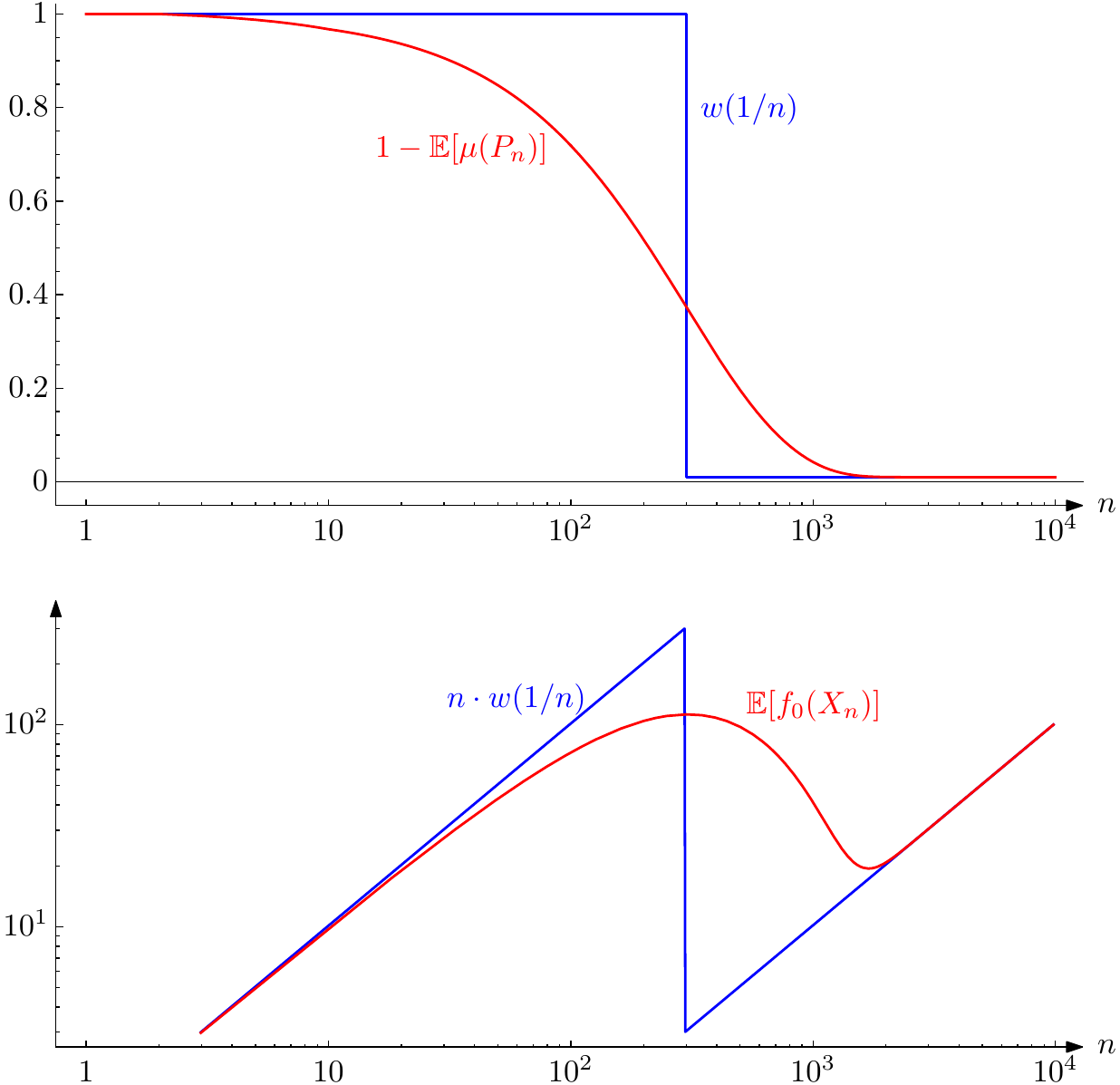}

 \caption{The quantities involved in Theorems~\ref{th:BL}--\ref{th:fvector}
for the drop construction with $p=1/100$, when the outer circle has twice
the radius of the inner circle. Top: $\E[1-\mu(P_n^\mu)]$ and $w(1/n)$, the $x$-axis being a logarithmic scale. Bottom: $\E[f_0(P_n^\mu)]$ and $n\cdot w(1/n)$ on a doubly-logarithmic scale.}
  \label{fig:example}
\end{figure}

This example also challenges
Inequality~\eqref{eq:alternative}. As shown in
Figure~\ref{fig:example} (top), the function $w^\mu(1/n)$ has a sharp drop,
while $\E\ptb{1-\mu(P_n^\mu)}$ shifts from the higher to the lower
branch of the step in a gradual way. For this construction, the
straightforward extension of Theorem~\ref{th:BL} would imply that
$\E[1-\mu(P_n^\mu)]$ remains within a constant multiplicative factor
of $ w^\mu(1/n)$. Thus, $\E[1-\mu(P_n^\mu)]$ would have to follow the
steep drop.

\subsection{A drop for the number of vertices.}

The fact that $\E[1-\mu(P_n^\mu)]$ {cannot} drop too
sharply is more easily seen by examining
$\E\ptb{f_0(P_n^\mu)}$. Since the measure defined in
Equation~\eqref{eq:one-drop} is non-atomic, Efron's
Formula~\eqref{eq:Efron} applies, so let us compare
$\E\ptb{f_0(P_n^\mu)}$ and $n\cdot n w^\mu(1/n)$. As illustrated
in Figure~\ref{fig:example} (bottom), $n\cdot w^\mu(1/n)$ has a
sawtooth shape with a sharp drop from 300 to~3 at $n=300$, and
$\E[f_0(P_n)]$ does actually shift from the higher to the lower branch
of the sawtooth, in a gradual way.

The fact that $\E[f_0(P_n^\mu)]$ can \emph{decrease} is perhaps
surprising at first sight, but this phenomenom is easy to explain: We
pick random points one by one. As long as all points lie on the inner
circle, $f_0(P_n^\mu)=n$. The first point to fall on the outer circle
swallows a constant fraction of the points into the interior of $P_n^\mu$,
while adding only a single new point on the convex hull, causing a big
drop. This happens around $n\approx 1/p$.

Again, the straightforward extension of Theorem~\ref{th:BL} would
imply that $\E[f_0(P_n)]$ follows the steep drop. Yet, on average, a
single additional point can reduce $f_0(P_n)$ by {a factor} of at
most~$1/2$. Hence, the drop of $\E[f_0(P_n)]$ cannot be so abrupt as
the drop of $n\cdot w^\mu(1/n)$, for $p$ small enough.

\subsection{A sequence of drops}

We prove Theorem~\ref{th:example} in Section~\ref{sec:example} by an
explicit construction that sets up a sequence of such drops. The
function $n\cdot w^\mu(1/n)$ reaches larger and larger peaks as $n$
increases, while dropping down more and more steeply between those
peaks. Our proof of Theorem~\ref{th:example} will not actually refer
to any drop or oscillating behavior.  We will simply identify a
sequence of values $n=n_1, n_2, \ldots$ for which
$\E[1-\mu(P_{n}^\mu)]$ is larger than $\frac12 w^\mu(\log_2
n/n)$.

\subsection{Open questions}

It is an outstanding open problem whether a drop as exhibited
by our two-circle construction can occur for the uniform selection
from a \emph{convex} body: Can the expectation of the number of
vertices of a random polytope decrease in such a setting? This is
impossible in the plane~\cite{DGG} or for the three dimensional
ball~\cite{Beermann}, but open in general. See~\cite{BR} and the
discussion therein.

Perhaps Theorem~\ref{th:BL} remains valid for some restricted class of
measures~$\mu$, for instance, logconcave measures.  One approach to
circumvent the ``impossibility result'' of Theorem~\ref{th:example}
would be to first extend \eqref{eq:wet} and establish that for $c>1$
there is $c'$ such that for all $t>0$
\[ w^{\mu}(t) \le w^{\mu}(ct) \le c' \cdot w^{\mu}(t).\] 
The second step would derive from this property the extension of
Theorem~\ref{th:BL}. We don't know if any of these two steps is valid.

We can weaken the claim of Theorem~\ref{th:BL} in a different way,
while maintaining it for all measures.  For example, it is plausible
that the upper bound in the theorem holds for a subset of numbers
$n \in \mathbb{N}$ of positive density. On the other hand we do not
know if there is a measure for which the bound of Theorem~\ref{th:BL}
is valid only for a finite number of natural numbers.

\section{Proof of Theorem~\ref{th:BLgen}}\label{sec:proofBLgen}

Let $\mu$ be a probability measure in $\R^d$. For better readability
we drop all superscripts $\mu$.

\subsection{Lower bound}\label{p:lowerbound}

The proof of the lower bound is similar to the one in the
convex-body case. For every fixed point $x\in W_t$, by definition,
there exists a half-space $h$ with $x\in h$ and $\mu(h)\le t$.  If
$h\cap P_{n}$ is empty, then $x$ is not in $P_{n}$, and therefore, for $x\in W_t$, 
\begin{equation}\label{A.}
  \Pr[x\notin P_{n}] \ge
  \Pr[h\cap P_{n}=\emptyset] = (1-\mu(h))^{n}\ge
  (1-t)^{n}.
\end{equation}
Then, for any $t$,
\[\begin{aligned}
1-\E [\mu(P_{n})] & =  \int_{x\in \R^d} \Pr[x \notin P_{n}]d\mu(x) \\
 &  \ge \int_{x\in W_t} \Pr[x \notin P_{n}]d\mu(x)\\
 &  \ge    \int_{x\in W_t}(1-t)^{n} d\mu(x)  = (1-t)^{n} w(t).
\end{aligned}\]
We choose $t=1/n$. Since the sequence $(1-\frac1n)^{n}$ is increasing, for $n \ge 2$ we have $1-\E [\mu(P_{n})] \ge \frac14  w(\tfrac1n)$.\qed

\bigskip

To obtain the analogous lower bound from
Theorem~\ref{th:fvector}(i), we write
\[ \E\ptb{f_0(P_n)} = n\E\ptb{1-\mu(P_{n-1})} \ge n(1-t)^{n-1} w(t).\]
Again, choosing $t=1/n$ yields the claimed lower bound
\[ \E\ptb{f_0(P_n)} \ge
n\pth{1-\frac1n}^{n-1} w(\tfrac1n)\ge {\frac1e}nw(\tfrac1n),\]
since the sequence
$(1-\frac1n)^{n-1}$ is now decreasing to ${\frac1e}$.

\subsection{Floating bodies and $\eps$-nets}\label{s:epsnet}

Before we turn our attention to the upper bound, we will point out a
connection
to
$\eps$-nets. Consider a probability space $(U,\mu)$ and a
family $\hh$ of measurable subsets of $U$. An \emph{$\eps$-net} for
$(U, \mu, \hh)$ is a set $S\subseteq U$ that intersects every
$h\in \hh$ with $\mu(h)\ge \eps$~\cite[$\mathsection 10.2$]{Mat}. In
the special case where $U=(\R^d,\mu)$ and $\mathcal{H}$ consists of
all half-spaces, if a set $S$ is an $\eps$-net, then the convex hull
$P$ of $S$ contains $\R^d\setminus W_\eps$. Indeed, assume that
there exists a point $x$ in $\R^d\setminus W_\eps$ and not in
$P$. Consider a closed halfspace $h$ that contains $x$ and is disjoint
from $P$.  Since $x \notin W_\eps$ we must have $\mu(h) > \eps$
and $S$ cannot be an $\eps$-net.

We call the region $\R^d\setminus W_\eps$ the \emph{floating body}
of the measure $\mu$ with parameter $\eps$, by analogy to the case of
convex bodies. The relation between floating bodies and $\eps$-nets
was first observed by Van Vu, who used the $\eps$-net Theorem to prove
that $P_n^{\mu}$ contains $\R^d \setminus W_{c\log n/n}$ with
high probability~\cite[Lemma 4.2]{VanVu} (a fact previously
established by B\'ar\'any and Dalla~\cite{BD} when $\mu$ is the
normalized Lebesgue measure on a convex body). This implies that, with
high probability, $1-\mu(P_n)\le w(c\log n/n)$. The analysis we give in
Section~\ref{sec:upperbound} refines Vu's analysis to sharpen the
constant. Note that Theorem~\ref{th:example} shows that Vu's result is
already asymptotically best possible.

\subsection{Upper bound}\label{sec:upperbound}

For $d=1$, the proof of the upper bound is straightforward and may actually be improved. 
Indeed, we have $w(t)=\min\{2t,1\}$, and Efron's Formula~\eqref{eq:Efron2} yields
\[
\E\ptb{1-\mu(P_n)} \le \frac1{n+1} \E\ptb{f_0(P_{n+1})} \le \frac2{n+1} \le w\pth{\frac{1}{n+1}} \le w\pth{3 \frac{\ln n}{n}}.
\]
We will therefore assume $d\ge2$.

We use a lower bound on the probability of a
random sample of $U$ to be an $\eps$-net for $(U, \mu, \hh)$.
We define the \emph{shatter function} (or growth function) of the family
$\hh$ as
\begin{displaymath}
  \pi_{\hh}(N) = \max_{X\subseteq U, |X|\le N} |\{\, X\cap h :
  h \in \hh\,\}|,
\end{displaymath}

\begin{lemma} [{\cite[Theorem 3.2]{KPW}}]
\label{lemma-shatter} 
Let $(U,\mu)$ be a probability space and $\hh$ a family of measurable
subsets of $U$. Let $X_s$ be a sample of $s$ random independent
elements chosen according to $\mu$. For any integer $N> s$, the
probability that $X_s$ is not a $\eps$-net for $(U,\mu,\hh)$ is at most
\begin{equation}
  \nonumber 
 2\pi_{\mathcal H}(N) \cdot
  (1-\tfrac{s}{N})^{(N-s)\eps-1}.
\end{equation}
\end{lemma}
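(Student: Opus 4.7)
The plan is to prove Lemma~\ref{lemma-shatter} by the standard double-sampling trick of Haussler--Welzl. First I would introduce an independent auxiliary sample $Y$ of $N-s$ points drawn from $\mu$, and set $Z := X_s \cup Y$, a multiset of size $N$. Let $A$ denote the event that $X_s$ is not an $\varepsilon$-net (equivalently, there exists $h \in \hh$ with $\mu(h) \ge \varepsilon$ and $h \cap X_s = \emptyset$), and let $B$ denote the event that some such $h$ additionally satisfies $|Y \cap h| \ge \lfloor (N-s)\varepsilon \rfloor$. The proof splits into showing $\Pr[A] \le 2\Pr[B]$ and $\Pr[B] \le \pi_{\hh}(N)(1-s/N)^{(N-s)\varepsilon-1}$, which combine to give the claim.

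For the first inequality, I would condition on $A$ and measurably select a witness $h$. Since $X_s$ and $Y$ are independent, $|Y \cap h|$ is then distributed as $\mathrm{Binomial}(N-s,\mu(h))$ with parameter $\ge \varepsilon$. The classical Kaas--Buhrmann fact that the median of a binomial with mean $m$ lies in $\{\lfloor m \rfloor, \lceil m \rceil\}$ gives $\Pr[|Y \cap h| \ge \lfloor (N-s)\varepsilon \rfloor \mid A] \ge \tfrac12$, whence $\Pr[B] \ge \tfrac12 \Pr[A]$.

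For the second inequality, I would condition on $Z$ as an unordered multiset. Conditional on $Z$, the partition $(X_s, Y)$ is uniform over all $s$-subsets of $Z$, and the occurrence of $B$ depends on $X_s$ only through the traces $\{h \cap Z : h \in \hh\}$, of which there are at most $\pi_{\hh}(N)$ by definition of the shatter function. For any fixed subset $T \subseteq Z$ of size $k$, the probability that $T \subseteq Y$ (equivalently $T \cap X_s = \emptyset$) is
\[ \frac{\binom{N-k}{s}}{\binom{N}{s}} \;=\; \prod_{i=0}^{k-1} \frac{N-s-i}{N-i} \;\le\; \left(1 - \tfrac{s}{N}\right)^{k}. \]
A witness for $B$ requires a trace $T = h \cap Z$ with $|T| \ge \lfloor (N-s)\varepsilon \rfloor \ge (N-s)\varepsilon - 1$ and $T \subseteq Y$; since $1-s/N \in (0,1)$, a union bound over the $\pi_{\hh}(N)$ candidate traces yields $\Pr[B \mid Z] \le \pi_{\hh}(N)(1-s/N)^{(N-s)\varepsilon-1}$, and integrating over $Z$ preserves the bound.

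The only delicate step is the median-of-binomial appeal in stage one: if $(N-s)\varepsilon$ is an integer, the median equals this value and the argument is tight; otherwise one must round down, which is precisely the source of the $-1$ in the final exponent. Everything else is careful bookkeeping in the double-sampling union bound.
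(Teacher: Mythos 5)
The paper does not prove this lemma; it quotes it directly as \cite[Theorem~3.2]{KPW}. Your double-sampling argument — introduce an independent second sample $Y$, lower-bound $\Pr[B]$ against $\Pr[A]$ via the binomial-median fact, then condition on the unordered multiset $Z$ and union-bound over the at most $\pi_{\hh}(N)$ traces using $\binom{N-k}{s}/\binom{N}{s}\le(1-s/N)^k$ — is precisely the Koml\'os--Pach--Woeginger refinement of the Haussler--Welzl $\eps$-net proof, and the bookkeeping (including the use of $\lfloor(N-s)\eps\rfloor\ge(N-s)\eps-1$ to produce the exponent) is correct.
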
 

Lemma~\ref{lemma-shatter} is a quantitative refinement of a
foundational result in learning theory ~\cite[Theorem~2]{VC}.
It is commonly used to prove that small $\eps$-nets exist for range
spaces of bounded Vapnik-Chervonenkis dimension~\cite{HW}, see
also~\cite[Theorem~3.1]{KPW} or~\cite[Theorem~15.5] {Pach}. For that
application, it is sufficient to show that the probability of failure
is less than 1; This works for $\eps\approx d \ln n/n$ (with
appropriate lower-order terms), where $d$ is the Vapnik-Chervonenkis
dimension.  In our proof, we will need a smaller failure probability
of order $o(1/n)$, and we will achieve this by setting $\eps\approx
(d+2) \ln n/n$.  We will apply the lemma in the case where $U = \R^d$
and $\hh$ is the set of halfspaces in~$\R^d$.  We mention that by
increasing $\eps$ more agressively, the probability of failure can be
made exponentially small.

For the family $\hh$ of halfspaces in $\R^d$, we have the
 following sharp bound on the shatter function~\cite{Harding}:
\[ \pi_{\hh}(N)\le 2\sum_{i=0}^{d} \binom{N-1}i.\]

The proof of the upper bound of Theorem~\ref{th:BLgen} starts
by remarking that for any $\eps \in [0,1]$ we have:
\begin{align*}
\E\ptb{1-\mu(P_n)} & = \int_{\R^d} \Pr[x \notin P_{n}]d\mu(x)\\
            & =  \int_{\R^d \setminus W_\eps} \Pr[x \notin P_{n}]d\mu(x) +  \int_{W_\eps} \Pr[x  \notin P_{n}]d\mu(x) \\
            & \le \int_{\R^d \setminus W_\eps} \Pr[\R^d \setminus W_\eps \not\subseteq P_{n}]d\mu(x) + \int_{W_\eps}d\mu(x)\\
            & \le \Pr[\R^d \setminus W_\eps \not\subseteq P_{n}] + w(\eps).
\end{align*}
Here, the first inequality between the probabilities holds since the
event $x \notin P_{n}$ trivially implies that $\R^d \setminus
W_\eps \not\subseteq P_{n}$ when $x\in \R^d \setminus W_\eps$.  We
thus have
\[\E\ptb{1-\mu(P_n)} \le  w(\eps) + \Pr[\R^d \setminus W_\eps \not\subseteq P_{n}].\]

We now want to set $\eps$ so that $\Pr[\R^d \setminus
W_\eps \not\subseteq P_{n}]$ is $\frac{\eps_d(n)}{n}$ with
$\eps_d(n) \to 0$ as $n \to \infty$. As shown in
Section~\ref{s:epsnet}, the event $\R^d \setminus W_\eps \not\subseteq
P_{n}$ implies that $P_{n}$ fails to be an $\eps$-net. The probability
can thus be bounded from above using Lemma~\ref{lemma-shatter} with
$s=n$. Taking logarithms, for any $N > n$,
\[ \ln \Pr[\R^d \setminus W_\eps \not\subseteq P_{n}] \le \ln
  \pi_{\mathcal H}(N) + \pth{(N-n)\eps-1}
  \ln (1-\tfrac{n}{N})
  + \ln 2.\]
Since we assume that $d\ge 2$, we have
\[ \pi_{\mathcal{H}}(N) \le 2\sum_{i=0}^{d} \binom{N-1}i\le N^d \quad \text{and} \quad  \ln\pi_{\mathcal{H}}(N) \le d \ln N.\]

We set $N = n \lceil \ln n \rceil$, so that:
\[\begin{aligned}
\ln \Pr[\R^d \setminus W_\eps \not\subseteq P_{n}] \le d \ln n & + d\ln \lceil \ln n \rceil\\& +
  \pth{(N-n)\eps-1}\ln (1-\tfrac{n}{N}) + \ln 2.
  \end{aligned}\]
We then set $\eps = \delta \frac{\ln n}{n}$, with $\delta \approx d$
to be fine-tuned later. If $n$ is large enough, the factor
$\pth{{(N-n)\eps-1}}\approx \delta\ln^2 n$ is nonnegative, and we can
use the inequality $\ln(1-x) \le -x$ for $x \in [0,1)$ in order to
bound the second term:
\begin{align*}
  \pth{(N-n))\eps-1} \ln (1-\tfrac{n}{N})  &\le- \bigl({(N-n)\eps-1}\bigr)\frac{n}{N}\\
  & = -n\eps +\frac{n}{\lceil \ln n \rceil}\eps +\frac1{\lceil \ln n
    \rceil} \le - \delta \ln n +\delta + 1.
\end{align*}
Altogether, we get
\[  \Pr[\R^d \setminus W_\eps \not\subseteq P_{n}] \le 2^{\delta+1}e \cdot n^{d-\delta}\lceil \ln n \rceil^d \]
so for every $\delta > d+1$ we have $\Pr[\R^d \setminus
W_\eps \not\subseteq P_{n}] = \frac{\eps_d(n)}{n}$ with $\eps_d(n) \to
0$ as $n \to \infty$. Setting $\delta=d+2$ yields the claimed
bound. \qed

\section{Proof of Theorem~\ref{th:example}}\label{sec:example}

In this section, logarithms are base $2$. For better readability we drop the superscripts $\nu$.

\subsection{The construction}

The measure $\nu$ is supported on a sequence of concentric circles
$C_1, C_2, \ldots$, where $C_i$ has radius \[r_i=1-\frac1{i+1}.\] On
each $C_i$, $\nu$ is uniform, implying that $\nu$ is rotationally
invariant. We let $D_i=\bigcup_{j \ge i} C_j$. For $i \ge 1$ we
put \[ \nu(D_i)= s_i := 4 \cdot 2^{-2^i} \] and remark that
$\nu(\R^2) = s_1 = 1$, so $\nu$ is a probability measure. The sequence
$\{s_i\}_{i \in \N}$ decreases very rapidly.  The probabilities of the
individual circles are
\[ p_i:=\nu(C_i)=s_i-s_{i+1}=4\cdot\pth{2^{-2^i}-2^{-2^{i+1}}} =s_i\pth{1-\frac{s_i}4}\approx s_i, \]
for $i\ge1$.

The infinite sequence of values $n$ for which we claim the inequality
of Theorem~\ref{th:example} is
\[  n_i :=2^{2^i+2i} \approx \tfrac 1{s_i} \log^2 \tfrac 1{s_i}.\]
In Section~\ref{wet}, we examine the wet part and prove that
$\wet{\tfrac{\log n_i}{n_i}}\le s_i$. We then want to establish the
complementary bound $\E\ptb{1-\nu(P_{n_i})}>
s_i/2$. Since $\nu$ is non-atomic, Efron's formula yields
\[ \E\ptb{1-\nu(P_{n_i })}
= \frac1{n_i+1} \E\ptb{f_0(P_{n_i+1)}}\]
and it suffices to establish that $\E\ptb{f_0(P_{n_i+1})} >
(n_i+1) s_i/2$. This is what we do in Section~\ref{random}.

\subsection{The wet part}
\label{wet}

Let us again drop the superscript $\nu$. Let $h_i$ be a closed
halfplane that has a single point in common with $C_i$, so its
bounding line is tangent to $C_i$. We have
\[ \wet{t}=s_{i} \text{, \ for $\nu(h_{i}) \le t < \nu(h_{i-1})$}.\]
So, as $t$ decreases, $\wet t$ drops step by step, each step being
from $s_i$ to $s_{i+1}$.  In particular,
\begin{equation}\label{eq:drops-one}
  \wet {t}\le s_{i} \iff  t< \nu(h_{i-1}).
\end{equation}

For $j > i$, the portion of $C_j$ contained in $h_i$ is equal to $2\arccos(r_i/r_j)$.
Hence, 
\[\nu(h_i\cap C_j)=\frac{\arccos(r_i/r_j)}{\pi} \cdot p_j.\]
We will bound the term $\arccos(r_i/r_j)$ by a more explicit
expression in terms of~$i$.  To get rid of the $\arccos$ function, we
use the fact that $\cos x \ge 1-x^2/2$ for all $x\in\R$. We obtain,
for $0\le y\le1$,
\[  \arccos (1-y)\ge \sqrt{2y}.\]
Moreover, the ratio ${r_i}/{r_{j}}$ can be bounded as follows:
\[\frac{r_i}{r_j}\le\frac{r_i}{r_{i+1}} =\frac i{i+1} \Bigm/\frac
{i+1}{i+2} = 1-\frac{1}{(i+1)^2}.\]
Thus we deduce that
\[ \frac{\arccos(r_i/r_j)}{\pi} \ge\frac{\arccos(1-
1/{(i+1)^2})}{\pi} \ge \frac{\sqrt{2}}{\pi(i+1)}. \]

We have established a bound on ${\arccos(r_i/r_j)}/{\pi}$, which is
the fraction of a single circle $C_j$ that is contained in
$h_i$. Hence, considering all circles $C_j$ with $j>i$ together, we
get
\[ \nu(h_i)\ge \frac{\sqrt{2}}{\pi(i+1)}s_{i+1}.\]
We check that for $i\ge 4$,
\[
  \frac{\log n_i}{n_i}=\frac{2^{i}+2i}{2^{2^i+2i}} =2^{-2^i}2^{-2i}
  (2^{i}+2i) =\frac{s_i}4 2^{-i}(1+2^{1-i}i)< s_i \frac{\sqrt{2}}{\pi
  i}\le \nu(h_{i-1}),
\]
because $2^{-i}(1+2^{1-i}i) < \frac{\sqrt{2}}{\pi i}$ for all $i\ge
4$.  Using \eqref{eq:drops-one}, this gives our desired bound:
\[\wet {\tfrac{\log n_i}{n_i}} \le s_i,\]
for all $i\ge 4$.  With little effort, one can show that actually
$\wet {\tfrac{\log n_i}{n_i}} = s_i$.  One can also see that, for any
$C>0$, the condition $\wet {C\tfrac{\log n_i}{n_i}}\le s_i $ holds if
$i$ is large enough, because the exponential factor $2^{-i}$ dominates
any constant factor~$C$ in the last chain of inequalities.  This
justifies the remark that we made after the statement of
Theorem~\ref{th:example}.

\subsection{The random polytope}
\label{random}

Assume now that $X_n$ is a set of $n$ points sampled independently
from $\nu$. We intend to bound from below the expectation
$\E\ptb{f_0(\conv X_{n_i+1})}$. Observe that for any $n
\in\N$ one has 
\[\E|X_n\cap C_i|=np_i\quad\text{and}\quad \Pr(X_n\cap
D_{i+1}=\emptyset)=(1-s_{i+1})^n.\]
Intuitively, as $n$ varies in the range near $n_i$, many points of
$X_n$ lie on $C_i$ and yet no point of $X_n$ lies in $D_{i+1}$. So
$P_n$ has, in expectation, at least $np_i \approx ns_i$ vertices. At
the same time, the term $\wet {\log n/n}$ in the claimed lower bound
drops to $s_i$. So the expected number of vertices is about $ns_i$
which is larger than $\frac 12 ns_i=\frac n2 \wet {\log n/n}$.

Formally, we estimate the expected number of vertices when $n=n_i+1$:
\begin{align*}
\E&\ptb{f_0(\conv X_{n_i+1})}\\
&\ge\E\ptb{f_0(\conv X_{n_i+1})\mid X_{n_i+1}\cap D_{i+1}=\emptyset} \cdot
\Pr(X_{n_i+1}\cap D_{i+1}=\emptyset)\\
&\ge\E[|X_{n_i+1}\cap C_i|]\cdot(1-s_{i+1})^{n_i+1}\\
&=(n_i+1)p_i(1-s_{i+1})^{n_i+1}\\
&=(n_i+1)s_i\left[ \frac {p_i}{s_i}(1-s_{i+1})^{n_i+1}\right]
\end{align*}
The last square bracket tends to 1 as $i \to \infty$. In particular, it is larger than~$\frac 12$ for $i \ge 4$. This shows that for all $i\ge 4$
$$
\E\ptb{f_0(\conv X_{n_i+1})}>\frac 12 (n_i+1)s_i
\ge
\frac 12(n_i+1)\wet{\tfrac{\log n_i} {n_i}}.
\eqno\qed\qedhere
$$

\subsection{Higher dimension}

 We can embed the plane containing $\nu$ in $\R^d$ for $d \ge 3$. The
analysis remains true but the random polytope is of course flat with
probability~1. To get a full-dimensional example, we can replace each
circle by a $(d-1)$-dimensional sphere, all other parameters being
kept identical: all spheres are centered in the same point, $C_i$ has
radius $1-\frac{1}{i+1}$, the measure is uniform on each $C_i$ and the
measure of $\cup_{j \ge i} C_i$ is $4 \cdot 2^{-2^i}$. The analysis holds
\emph{mutatis mutandis}.

As another example, which does not require new calculations, we can
combine $\nu$ with the uniform distribution on the edges of a regular
$(d-2)$-dimensional simplex in the $(d-2)$-dimensional subspace
orthogonal to the plane that contains the circles, mixing the two
distributions in the ratio $50:50$.

In all our constructions, the measure is concentrated on lower-dimensional
manifolds of $\R^d$, circles, spheres, or line segments. If a
continuous distribution is desired, one can replace each circle in the
plane by a narrow annulus and each sphere by a thin spherical shell,
without changing the characteristic behaviour.

\section{An alternative treatment of atomic measures}
\label{sec:alternative}
Even for measures with atoms, one can give a precise meaning to
Efron's formula:
The expression in~\eqref{eq:Efron0} counts the expected number
of convex hull vertices of $P_n$ that are unique in the sample $X_n$.
From this, it is obvious that 
Efron's formula~\eqref{eq:Efron} is a lower bound on $\E[f_0(P_n)]$
\eqref{eq:Efron2}.

For dealing with atomic measures, there is alternative possibility.
The resulting statements involve different quantities than our
original results, but they have the advantage of holding for every measure.
We denote
 by $\bar f_0(X_n
)$ the number of points of the sample $X_n$
that lie \emph{on the boundary}
of their convex hull $P_n
$, counted with multiplicity in case of coincident points.
We denote
 by $\breve P_n
$ the interior of 
 $P_n
$.
 Then a derivation analogous to \thetag{\ref{eq:Efron0}--\ref{eq:Efron}} leads to
the following variation of
Efron's formula: 
\begin{equation}
  \label{eq:Ef-open}
  \E [\bar f_0(X_n
)]
=n(1-\E [\mu(\breve P_{n-1}
)])
\end{equation}
We emphasize that we mean the boundary and interior with respect to
the ambient space $\R^d$, not the \emph{relative} boundary or interior.

Even for some non-atomic measures, this gives different results. Consider
the uniform distribution on the boundary of an equilateral triangle.
Then $\E[\bar f_0(X_n)]=n$,
while $\E[ f_0(P_n)]\le 6$.
Accordingly,
$\E [\mu(\breve P_{n})]=0$, while
$\E [\mu( P_{n})]$ converges to~$1$.

We denote the closure of
the wet part $W_t^{\mu}$ by
 $\bar W_t^{\mu}$
and 
its
measure by
$\bar w^{\mu}(t):=\mu(\bar W_t^{\mu})$. 

With these concepts, we can prove the following analogs of
Theorems~\ref{th:BLgen}--\ref{th:fvector}. Observe that for a measure $\mu$ for which for every hyperplane $H$, 
$\mu(H)=0$ the content of this theorem is the same as the previous ones. 

\begin{theorem}
  \begin{enumerate}
  \item [(i)]
  For any probability measure $\mu$ in $\R^d$ and $n \ge 2$,
  \begin{equation}
    \label{eq:closed-wet}
 \tfrac 14  \bar w^{\mu}(\tfrac1n) \le {\E[1-\mu(\breve P_n^\mu)]} \le 
\bar     w^{\mu}\bigl((d+2)\tfrac{\ln n}n\bigr) + \tfrac{\varepsilon_d(n)}{n},
  \end{equation}
and
  \begin{equation}
    \label{eq:boundary-f0}
     \tfrac 1e n \bar w^{\mu}(\tfrac1n) \le \E\ptb{\bar f_0(X_n^\mu)} \le 
     n \bar w^{\mu}\bigl({(d+2)\tfrac{\ln n}n}\bigr) + \varepsilon_d(n),
  \end{equation}
          where $\varepsilon_d(n)\to 0$ as $n\to+\infty$ and is independent of $\mu$.
        \item [(ii)] 
There is a non-atomic probability measure $\nu$ on $\R^2$ such
that
\[
    \E[1-\mu(\breve P_n^\nu)] > \tfrac 1{2} \cdot
\bar    w^\nu(\log_2 n/ n) \]
and
 \[
     \E\ptb{\bar f_0(X_n^\nu)} > \tfrac 1{2} n \cdot
    \bar w^\nu({\log_2 n/ n}) \]
 for infinitely many $n$.
\end{enumerate}

\end{theorem}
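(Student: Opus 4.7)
The plan is to adapt the proofs of Theorems~\ref{th:BLgen} and~\ref{th:fvector}(i) by substituting $\bar W_t$ for $W_t$ and $\breve P_n$ for $P_n$, and to obtain part~(ii) directly from Theorems~\ref{th:example} and~\ref{th:fvector}(ii). Since the identity~\eqref{eq:Ef-open} plays the same role here as Efron's formula~\eqref{eq:Efron}, once the bounds on $\E[1-\mu(\breve P_n)]$ are established the bounds on $\E[\bar f_0(X_n)]$ follow by the same index-shifting argument used in Section~\ref{p:lowerbound}, with the index shift absorbed into the error term on the upper-bound side.

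For the lower bound, I would first establish the characterisation: if $x \in \bar W_t$, then there is a closed half-space $h$ with $x \in h$ and $\mu(\breve h) \le t$. Take $x_k \to x$ with $x_k \in W_t$ and associated closed half-spaces $h_k \ni x_k$ of $\mu$-measure at most $t$; by compactness, after extracting a subsequence $h_k$ converges to a closed half-space $h$ with $x \in h$, and Fatou's lemma applied to the pointwise inequality $\mathbf{1}_{\breve h} \le \liminf \mathbf{1}_{h_k}$ yields $\mu(\breve h) \le t$. Given such an $h$, the event $X_n \cap \breve h = \emptyset$ has probability $(1-\mu(\breve h))^n \ge (1-t)^n$, and on this event $P_n$ is contained in the closed half-space $\R^d \setminus \breve h$, so $\breve P_n \subseteq \R^d \setminus h$ and in particular $x \notin \breve P_n$. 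Integrating over $\bar W_t$ and choosing $t=1/n$ yields the lower bound in~\eqref{eq:closed-wet}; applying~\eqref{eq:Ef-open} then produces the one in~\eqref{eq:boundary-f0}.

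For the upper bound, I would follow the $\eps$-net argument of Section~\ref{sec:upperbound} but with the family $\hh$ of \emph{open} half-spaces in $\R^d$. Every trace $X \cap g$ with $g$ open can be realised by a slightly inward-shifted closed half-space, so the shatter function of $\hh$ is bounded by the one used in Section~\ref{sec:upperbound} and the failure-probability estimates transfer verbatim. The only new geometric ingredient is the claim: if $X_n$ is an $\eps$-net for open half-spaces, then $\R^d \setminus \bar W_\eps \subseteq \breve P_n$. To prove it, suppose $x \notin \breve P_n$; a separation argument provides an affine functional $\phi$ and a value $c$ with $\phi < c$ on $\breve P_n$ and $\phi(x) \ge c$, so $X_n$ avoids the open half-space $g = \{\phi > c\}$ and the $\eps$-net property gives $\mu(g) \le \eps$. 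Taking $x_k \to x$ with $\phi(x_k) > c$ and shifting the bounding hyperplane slightly inward produces a closed half-space contained in $g$ that still contains $x_k$ and has mass at most $\eps$, so $x_k \in W_\eps$ and hence $x \in \bar W_\eps$. The rest of the bound, including the tuning $\eps = (d+2) \ln n / n$, is identical to Section~\ref{sec:upperbound}.

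For part~(ii), I would reuse the measure $\nu$ constructed in Section~\ref{sec:example}. It is supported on a countable union of circles and therefore assigns zero mass to every line, so $\mu(h) = \mu(\breve h)$ for every half-space and hence $W_t = \bar W_t$ and $\bar w^\nu = w^\nu$. Almost surely $\partial P_n^\nu$ is a finite union of line segments of $\mu$-measure zero, so $\mu(\breve P_n^\nu) = \mu(P_n^\nu)$ and $\bar f_0(X_n^\nu) = f_0(P_n^\nu)$ with probability one, and the claimed inequalities reduce to Theorems~\ref{th:example} and~\ref{th:fvector}(ii). The main subtlety I expect to have to handle carefully is the consistent bookkeeping of open versus closed half-spaces: the compactness argument underlying the lower bound only forces a bound on $\mu(\breve h)$ rather than $\mu(h)$, and the separation argument behind the upper bound naturally produces an open half-space. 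Making these two sides dovetail correctly when $\mu$ places positive mass on hyperplanes is the one point that must be checked with care.
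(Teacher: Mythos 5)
Your overall plan is sound and reaches the same conclusions, but for the upper bound you take a genuinely different route than the paper. The paper reuses the containment $\R^d\setminus W_\eps\subseteq P_n$ already established for the family of \emph{closed} half-spaces, and then simply observes that monotonicity of the interior operator gives $\R^d\setminus\bar W_\eps=\mathrm{int}(\R^d\setminus W_\eps)\subseteq \breve P_n$; no new $\eps$-net argument is needed. You instead redo the $\eps$-net argument from scratch for \emph{open} half-spaces and reprove the corresponding containment directly. Both work, and your route is more self-contained, but the paper's is the shorter and cleaner of the two. On the lower-bound side, your compactness-plus-Fatou derivation of the characterisation ``$x\in\bar W_t\iff$ there is a closed half-space $h\ni x$ with $\mu(\breve h)\le t$'' is actually \emph{more} careful than what the paper writes: the paper states the key fact only for $x\in W_t$ (where it is trivial since $\mu(\breve h)\le\mu(h)\le t$) and relies implicitly on a limiting argument to pass to the closure; you supply that argument explicitly, and it is correct (the $c_k$ are bounded above because $\mu(h_k)\le t<1$ and $\mu$ is tight, and bounded below because $h_k\ni x_k\to x$). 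For part~(ii) you and the paper do the same thing.

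Two small imprecisions in your upper-bound separation step are worth flagging. First, to conclude that $X_n$ avoids the open half-space $g=\{\phi>c\}$ you need $\phi\le c$ on all of $P_n$, not merely $\phi<c$ on $\breve P_n$; the correct separation statement is that $x\notin\mathrm{int}(P_n)$ gives a closed half-space $H\supseteq P_n$ with $x\notin\mathrm{int}(H)$, whence $g=\R^d\setminus H$ is an open half-space disjoint from $X_n$ with $x\in\bar g$. Second, your phrasing ``$\phi<c$ on $\breve P_n$'' is vacuous when $\breve P_n=\emptyset$, and in that degenerate case (e.g.\ $X_n$ collinear) the argument needs a word: one of the two open half-spaces bounded by the affine hull of $P_n$ is disjoint from $X_n$, so the $\eps$-net hypothesis forces $\bar W_\eps=\R^d$ and there is nothing to prove. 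Both gaps are easily repaired and do not affect the soundness of your proposal.
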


\begin{proof}[Proof sketch]
Since the derivation is parallel to the proofs in Sections
\ref{sec:proofBLgen}--\ref{sec:example}, we only sketch a few
crucial points.

(i)
For proving the lower bound in
\eqref{eq:closed-wet}, we modify the initial argument leading to \eqref{A.}:
For every
 fixed $x\in W_t$,
there is a \emph{closed} half-space $h$ with $x\in h$ 
whose 
 corresponding
 open halfspace $\breve h$
has measure
$\mu(\breve h)\le t$.
Therefore,
\begin{displaymath}
  \Pr[x\notin \breve P_{n}] \ge
  \Pr[h\cap \breve P_{n}=\emptyset]=
  \Pr[\breve h\cap P_{n}=\emptyset]
 = (1-\mu(\breve h))^{n}\ge
  (1-t)^{n}.
\end{displaymath}
The remainder of the proof can be adapted in a straightforward way.

In
Section~\ref{s:epsnet}, we have established that
for  an $\eps$-net $S$, its
 convex hull
$P$ 
contains $\R^d\setminus W_\eps$.
Since the interior operator is monotone, this implies that
 $\R^d\setminus \bar W_\eps \subseteq\breve P$.
Therefore, the 
 $\eps$-net argument of Section~\ref{sec:upperbound}
applies to the modified setting
and establishes
 the upper bound in~\eqref{eq:closed-wet}.

Finally,
by Efron's modified formula~\eqref{eq:Ef-open}, the result 
\eqref{eq:closed-wet}
carries over to
\eqref{eq:boundary-f0} as in our original derivation.

(ii)
The lower-bound construction of Theorem~\ref{th:example} gives zero
measure to every hyperplane, and therefore all quantities in part~(ii)
are equal to the corresponding quantites in Theorem~\ref{th:example}
and Theorem~\ref{th:fvector}(ii).
\end{proof}

\subsection*
{Acknowledgements.} I. B. was supported by the Hungarian National
Research, Development and Innovation Office NKFIH Grants K 111827 and
K 116769, and by the B\'ezout Labex (ANR-10-LABX-58). X.~G.\ was
supported by Institut Universitaire de France. 
The authors are grateful for the hospitality during the ASPAG (ANR-17-CE40-0017)
 workshop on geometry, probability, and algorithms in Arcachon in
 April 2018.

\bigskip

\vspace{.5cm} {\sc Imre B\'ar\'any}\\
  {\footnotesize R\'enyi Institute of Mathematics}\\[-1.3mm]
  {\footnotesize Hungarian Academy of Sciences}\\[-1.3mm]
  {\footnotesize PO Box 127, 1364 Budapest, and }\\[-1.3mm]
  {\footnotesize and}\\[-1.3mm]
  {\footnotesize Department of Mathematics}\\[-1.3mm]
  {\footnotesize University College London}\\[-1.3mm]
  {\footnotesize Gower Street, London WC1E 6BT}\\[-1.3mm]
  {\footnotesize England}\\[-1.3mm]
  {\footnotesize e-mail: {\tt barany@renyi.hu}}

\vspace{.5cm} {\sc Matthieu Fradelizi}\\
  {\footnotesize Universit\'e Paris-Est,}\\[-1.3mm]
  {\footnotesize LAMA (UMR 8050), UPEM, UPEC, CNRS, }\\[-1.3mm]
  {\footnotesize F-77454, Marne-la-Vall\'ee}\\[-1.3mm]
  {\footnotesize France }\\[-1.3mm]
  {\footnotesize e-mail: {\tt matthieu.fradelizi@u-pem.fr \tt }}

\vspace{.5cm} {\sc Xavier Goaoc}\\
  {\footnotesize Universit\'e de Lorraine, CNRS, Inria}\\[-1.3mm]
  {\footnotesize LORIA}\\[-1.3mm]
  {\footnotesize F-54000 Nancy}\\[-1.3mm]
  {\footnotesize France }\\[-1.3mm]
  {\footnotesize e-mail: {\tt xavier.goaoc@loria.fr}}

\vspace{.5cm} {\sc Alfredo Hubard}\\
  {\footnotesize Universit\'e Paris-Est, Marne-la-Vall\'ee }\\[-1.3mm]
  {\footnotesize Laboratoire d'Informatique Gaspard Monge }\\[-1.3mm]
  {\footnotesize 5 Boulevard Descartes, 77420 Champs-sur-Marne }\\[-1.3mm]
  {\footnotesize France }\\[-1.3mm]
  {\footnotesize e-mail: {\tt alfredo.hubard@u-pem.fr \tt }}

\vspace{.5cm} {\sc G\"unter Rote}\\
  {\footnotesize Freie Universit\"at Berlin}\\[-1.3mm]
  {\footnotesize Institut f\"ur Informatik}\\[-1.3mm]
  {\footnotesize Takustra\ss e 9, 14195 Berlin}\\[-1.3mm]
  {\footnotesize e-mail: {\tt rote@inf.fu-berlin.de}}

\end{document}